\newtheorem{theorem}{Theorem}
\newtheorem{prop}{Proposition}
\theoremstyle{remark}
\theoremstyle{definition}
\newtheorem*{cheeger*}{Cheeger Inequality}
\newtheorem*{hcheeger*}{Higher-Order Cheeger Inequality}
\newcommand{\one}{\ensuremath{\mathbbm{1}}}
\newcommand{\e}[2][]{\mathbb{E}_{#1}\left[#2\right]}
\newcommand{\p}{{\mathbb{P}}}
\newcommand{\eps}{\varepsilon}
\newcommand{\C}{\ensuremath{\mathbb{C}}}
\newcommand{\R}{\ensuremath{\mathbb{R}}}
\def\Ddots{\mathinner{\mkern1mu\raise\p@
\vbox{\kern7\p@\hbox{.}}\mkern2mu
\raise4\p@\hbox{.}\mkern2mu\raise7\p@\hbox{.}\mkern1mu}}
\newcommand{\paren}[1]{\ensuremath{\left( #1 \right)}}
\newcommand{\ceil}[1]{\ensuremath{\left\lceil #1 \right\rceil}}
\newcommand{\floor}[1]{\ensuremath{\left\lfloor #1 \right\rfloor}}
\renewcommand{\L}{\ensuremath{\mathcal{L}}}
\DeclareMathOperator{\VOL}{Vol}
\newcommand{\Vol}[1]{\ensuremath{\VOL\paren{#1}}}
\newcommand{\hgk}{h_G^{(k)}}
\newcommand{\lilOh}[1]{\ensuremath{\mathit{o}\!\paren{#1}}}
\newcommand{\ind}{\mathbf{1}}
\DeclareMathOperator{\MOD}{mod}
\renewcommand{\mod}{\ensuremath{\MOD}}
\DeclareMathOperator{\BER}{Ber}
\newcommand{\Ber}[1]{\ensuremath{\BER}\left(#1\right)}
\DeclareMathOperator{\TR}{tr}
\newcommand{\tr}[1]{\ensuremath{\TR}\left(#1\right)}
\newcommand{\bv}{\mathbf{v}}
\newcommand{\mL}{\mathcal{L}}
\newcommand{\bD}{\mathbf{D}}
\newcommand{\bDh}{\mathbf{D}^{-1/2}}
\newcommand{\bA}{\mathbf{A}}
\newcommand{\bI}{\mathbf{I}}
\begin{document}
\title{A Linear $k$-fold Cheeger inequality}
\author{Franklin Kenter}
\author{Mary Racliffe}

\begin{abstract}
Given an undirected graph $G$, the classical Cheeger constant, $h_G$, measures the optimal partition of the vertices into 2 parts with relatively few edges between them based upon the sizes of the parts. The well-known Cheeger's inequality states that $2 \lambda_1 \le h_G \le \sqrt {2 \lambda_1}$ where $\lambda_1$ is the minimum nontrivial eigenvalue of the normalized Laplacian matrix. 

Recent work has generalized the concept of the Cheeger constant when partitioning the vertices of a graph into $k > 2$ parts. While there are several approaches, recent results have shown these higher-order Cheeger constants to be tightly controlled by $\lambda_{k-1}$, the $(k-1)^{\textrm{th}}$ nontrivial eigenvalue, to within a quadratic factor.

We present a new higher-order Cheeger inequality with several new perspectives. First, we use an alternative higher-order Cheeger constant which considers an ``average case'' approach. We show this measure is related to the average of the first $k-1$ nontrivial eigenvalues of the normalized Laplacian matrix. Further, using recent techniques, our results provide linear inequalities using the $\infty$-norms of the corresponding eigenvectors. Consequently, unlike previous results, this result is relevant even when $\lambda_{k-1} \to 1$.
\end{abstract}

\maketitle

\section{Introduction}\label{S:intro}

Let $G=(V,E)$ be an undirected graph, and let $\mL = \bDh (\mathbf{I} - \bA) \bDh$ be the normalized Laplacian of $G$ with eigenvalues $0=\lambda_0 \le \lambda_1 \le \ldots \le \lambda_{n-1}$. It is a basic fact in spectral graph theory that $\lambda_{k-1} = 0$ if and only if $G$ has at least $k$ connected components. Additionally, if $\lambda_1 \approx 0$ then the vertices of $G$ can be partitioned into 2 parts, nearly disconnected from one another. This is formalized through the Cheeger constant and the Cheeger inequality.

The classical Cheeger constant is defined as
\[h_G = \inf_{S \subset V(G)} h(S), \text{ ~~where~~ } h(S) = \frac{e(S, \overline{S})}{\min\{\Vol{S}, \Vol{\overline{S}}\}},\]
where $e(S, \overline{S})$ is the number of edges between $S$ and its complement, and $\Vol{S}$ is the sum of vertex degrees in $S$. The classical Cheeger inequality relates $h_G$ to the first eigenvalue of the normalized Laplacian matrix, as follows.
\begin{cheeger*}[see for example \cite{chung1997spectral}]
Let $\lambda_1$ be the first nontrivial eigenvalue of a connected graph $G$. Then
\[\frac{\lambda_1}{2}\leq h_G\leq \sqrt{2\lambda_1}.\]
\end{cheeger*}

%The common thread running through much of this literature is to find a way to generalize the Cheeger constant, a graph parameter that, loosely speaking, indicates how difficult it is to break a graph into two pieces. The classical Cheeger constant is defined as
%\[h_G = \inf_{S\subset V(G)} \frac{e(S, \overline{S})}{\min\{\Vol{S}, \Vol{\overline{S}}\}},\]

Recently, some strengthenings of Cheeger's inequality have appeared (see, for example, \cite{chung1993laplacians, kenter2014linear, kwok2013improved}). Moreover, several recent results have generalized to a so-called ``higher order'' Cheeger constant (see, for example \cite{lee2012multi, louis2011algorithmic, louis2012many}) by considering a partition of $V(G)$ into $k>2$ parts. While there are several different definitions of a $k^{\textrm{th}}$ order Cheeger constant, one approach is to define 
the $k$-fold cheeger constant to be
\[\hat h_G^{(k)} = \inf_{\mathcal{S}} \max_i h(S_i)\]
 where the infimum ranges over all partitions of vertices $\mathcal{S} = \{S_1, S_2, \ldots S_k\}$. In this case, we have:
\begin{hcheeger*}[Lee, Gharan and Trevisan, \cite{lee2012multi}]
Let $\lambda_{k-1}$ be the $(k-1)^{\textrm{th}}$ nontrivial eigenvalue of a connected graph $G$. Then
\[\frac{\lambda_{k-1}}{2}\leq \hat h_G^{(k)} \leq O(k^2) \sqrt{2\lambda_{k-1}}.\]
\end{hcheeger*}
This result formally demonstrates that if $G$ can be partitioned into $k$ parts which are nearly disconnected from one another, then $\lambda_k \approx 0$. Similar results for a variant of $\hat\hgk$ can be found in \cite{louis2011algorithmic}.

The Cheeger constant and associated spectral information can be used to find clusters in graphs; that is, subgraphs that are highly connected. This has been a topic of wide interest in both the mathematics and computer science literature (see, for example, \cite{girvan2002community, graham2010finding, lee2012multi, louis2012many, ng2002spectral, schaeffer2007graph, spielman2008local}, among many others).  

This article expands upon previous work on the Cheeger constant in two ways. First, we work with the following new notion of a $k$-fold Cheeger constant. For a given partition $\mathcal{S}=\{S_1, S_2, \dots, S_k\}$ of $V(G)$, define the Cheeger constant of the partition, $\hgk(\mathcal{S})$, to be
\[h_G^{(k)}(\mathcal{S}) = \frac{1}{k}\displaystyle\sum_{i\neq j} \frac{e(S_i, S_j)}{\min\{\Vol{S_i}, \Vol{S_j}\}}.\]
We then define the $k^{\textrm{th}}$ Cheeger constant of $G$ to be $h_G^{(k)}=\inf_{\mathcal{S}} h_G^{(k)}(\mathcal{S})$. Specifically, while previous work focused on generalizing the Cheeger constant using a ``worst case'' approach, we consider the alternative ``average case'' approach. That is, $\hat h^{(k)}_G$ requires all sets in a partition to have a small Cheeger ratio, whereas $h^{(k)}_G$ can be small even if a small number of the sets have a large Cheeger ratio. We here reproduce a lower bound for $\hgk$ that agrees with the standard Cheeger inequality when $k=2$. Second,  we extend upon previous work of the first author \cite{kenter2014linear} which gives a {\it linear} upper bound at the expense of using eigenvector norms. We prove:

\begin{theorem}\label{T:main}
Fix a constant $k$. Let $G$ be an undirected graph on $n$ vertices, with maximum degree $\Delta$, and suppose there exists a constant $\beta>0$ such that $\frac{\Delta}{\Vol{G}}=\lilOh{n^{-\beta}}$. Let $0=\lambda_0\leq\lambda_1\leq\dots\leq\lambda_{k-1}$ be the first $k$ eigenvalues of $\L$, with corresponding harmonic eigenvectors $x_0, x_1, \dots, x_{k-1}$, and suppose that $\lambda_{k-1}\leq1$. Let $\alpha = \max\{\|x_i\|_\infty\ |\ i=1, 2, \dots, k-1\}$, and let $\Lambda=\frac{1}{k}\sum_{i=1}^{k-1}(1-\lambda_i)$%, and let $\Lambda_2=\frac{1}{k}\sum_{i=1}^{k-1}\lambda_i$
. Then the $k$-fold Cheeger constant $\hgk$ satisfies
\[\frac{1}{2}-\frac{\Lambda}{2}\leq\hgk\leq \left[\frac{1}{2}-\frac{1}{4k} -\frac{(k-1)\Lambda}{4\Vol{G}\alpha^2}\right](1+\lilOh{1}).\]

%this should be \frac{k-1}{2}

\end{theorem}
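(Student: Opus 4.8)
The plan is to prove the two inequalities separately: the lower bound by a short spectral estimate, and the upper bound --- the substantive direction, in the spirit of \cite{kenter2014linear} --- by exhibiting a partition built from the harmonic eigenvectors $x_1,\dots,x_{k-1}$.

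For the lower bound, fix an arbitrary partition $\mathcal{S}=\{S_1,\dots,S_k\}$ and form the vectors $u_i=\Vol{S_i}^{-1/2}\bD^{1/2}\one_{S_i}$; these are orthonormal and span a $k$-dimensional subspace $W\subseteq\R^n$, with orthogonal projection $P_W$. A direct computation gives $u_i^{\mathsf T}\L u_i=e(S_i,\overline{S_i})/\Vol{S_i}$, so the Ky Fan / Courant--Fischer trace inequality yields $\sum_{i=1}^{k} e(S_i,\overline{S_i})/\Vol{S_i}=\tr{P_W\L P_W}\ge\sum_{j=0}^{k-1}\lambda_j$. I would then pass from $\hgk(\mathcal{S})$ to this quantity via the elementary bound $\min\{a,b\}^{-1}\ge\tfrac12(a^{-1}+b^{-1})$, which shows $\sum_{i\neq j}e(S_i,S_j)/\min\{\Vol{S_i},\Vol{S_j}\}\ge\sum_i e(S_i,\overline{S_i})/\Vol{S_i}$; dividing by $k$, applying the trace inequality, and rewriting the result through $\Lambda$ gives the claimed lower bound (which at $k=2$ reduces to the classical $h_G\ge\lambda_1/2$).

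For the upper bound I would work with the spectral embedding $v\mapsto F(v)=\big(x_1(v),\dots,x_{k-1}(v)\big)\in\R^{k-1}$. Each $x_i$ is $\bD$-orthogonal to the all-ones vector, so $F$ is centered; I would round it into $k$ classes $S_1,\dots,S_k$ by a randomized scheme symmetric about the origin (e.g.\ cones cut out by a random rotation, or an equivalent random-halfspace construction) and estimate $\e{\hgk(\mathcal{S})}$. The decisive property of the \emph{harmonic} eigenvectors is the identity $\sum_{uv\in E}x_i(u)x_i(v)=\tfrac12(1-\lambda_i)\sum_v d_v x_i(v)^2$, which follows from $\bA x_i=(1-\lambda_i)\bD x_i$ and says that the $i$-th coordinate of $F$ agrees, in a degree-weighted $\ell_2$ sense, across a $(1-\lambda_i)$-portion of the edges. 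Splitting $E$ by the sign of $x_i(u)x_i(v)$ and invoking $|x_i(u)x_i(v)|\le\alpha^2$ converts this into the statement that at least a $\Theta\!\big((1-\lambda_i)/(\Vol{G}\,\alpha^2)\big)$-fraction of edges are ``aligned'' in coordinate $i$ and hence are separated with probability strictly below the symmetric baseline --- this is exactly the $\ell_\infty$-norm substitute for the Cauchy--Schwarz step that produces the $\sqrt{\lambda}$ in the classical argument. Summing these savings over $i=1,\dots,k-1$ against the baseline separation probability of the rounding produces the bracketed quantity $\tfrac12-\tfrac1{4k}-\tfrac{(k-1)\Lambda}{4\Vol{G}\,\alpha^2}$ as the controlling term in $\e{\hgk(\mathcal{S})}$. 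Finally, the hypothesis $\Delta/\Vol{G}=\lilOh{n^{-\beta}}$ guarantees that no single vertex holds a non-negligible share of $\Vol{G}$, so a bounded-differences concentration argument shows that with high probability every class has volume $(1+\lilOh1)\Vol{G}/k$ --- whence each denominator $\min\{\Vol{S_i},\Vol{S_j}\}$ equals $(1+\lilOh1)\Vol{G}/k$ --- and that $\hgk(\mathcal{S})$ is within a $(1+\lilOh1)$ factor of its mean; fixing any partition attaining at least the mean then completes the proof.

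I expect the upper bound to be the main obstacle. First, $\hgk(\mathcal{S})$ weighs all $\binom{k}{2}$ pairwise edge counts, and the minima in the denominators couple them, so the balance/concentration step is not a cosmetic appendage: it is what permits replacing every denominator by $\Vol{G}/k$ and then treating the $k(k-1)$ numerators essentially independently, and it is exactly here that $\Delta/\Vol{G}=\lilOh{n^{-\beta}}$ is used. Second, one must propagate the per-coordinate alignment savings for all $k-1$ eigenvectors through a single rounding simultaneously --- tracking the cross interactions among the $x_i$ and verifying that the baseline constant is $\tfrac12-\tfrac1{4k}$ --- and it is in this bookkeeping, rather than in any isolated inequality, that the quadratic loss of the previous higher-order Cheeger inequalities is avoided.
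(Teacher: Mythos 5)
Your lower bound is essentially the paper's own proof: the same orthonormal vectors $\Vol{S_i}^{-1/2}\bD^{1/2}\one_{S_i}$, the same trace form of Courant--Fischer, and the same elementary inequality $1/\min\{a,b\}\geq\frac12(1/a+1/b)$. That direction is fine.

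The upper bound, however, has a genuine gap, and it sits exactly at the step you wave through. Your rounding is a geometric one (random cones/halfspaces applied to the spectral embedding $F$), and your quantitative input is the sign-splitting bound showing that at least a $\frac{1-\lambda_i}{\alpha^2\Vol{G}}$-fraction of edges have $x_i(u)x_i(v)>0$. But knowing an edge is \emph{aligned} in coordinate $i$ tells you nothing quantitative about how far below the baseline its cut probability falls: for any cone-type rounding that reduction depends on the magnitudes $|x_i(u)|,|x_i(v)|$, and an aligned edge with both endpoints near the origin of the embedding is cut at essentially the baseline rate. So the sign-splitting step discards precisely the information needed to recover the exact coefficient $\frac{(k-1)\Lambda}{4\Vol{G}\alpha^2}$; a $\Theta(\cdot)$ cannot be upgraded to the stated constant, and the claim that the bookkeeping ``produces the bracketed quantity'' is an assertion of the answer, not a derivation. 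The paper sidesteps this entirely by making the assignment probabilities \emph{affine in the eigenvector entries}: each $v$ goes independently to class $j\leq k-1$ with probability $\frac{1-2\delta}{2(k-1)}+\frac{x_j(v)}{2(k-1)\|x_j\|_\infty}$, so that $\e{e(S_j,S_j)}$ is literally the quadratic form $\e{\mathbf w_j}^*\bD^{1/2}(\bI-\L)\bD^{1/2}\e{\mathbf w_j}$ (Proposition \ref{expquadform}, using pairwise independence and the zero diagonal), which evaluates \emph{exactly} to $\bigl(\frac{1-2\delta}{2(k-1)}\bigr)^2\Vol{G}+\frac{1-\lambda_j}{4\|x_j\|_\infty^2}$ with no sign-splitting and no loss.

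Two further signs that the target constants are being fitted rather than derived. First, the paper's partition is deliberately \emph{unbalanced}: classes $1,\dots,k-1$ each carry expected volume $\approx\Vol{G}/(2(k-1))$ while class $k$ absorbs the remaining $\approx\Vol{G}/2$, and this asymmetry is exactly where the baseline $\frac12-\frac1{4k}$ comes from; a rounding in which every class has volume $(1+\lilOh{1})\Vol{G}/k$, as you assert, would produce a baseline of $\frac12-\frac1{2k}$ instead (compare the $K_n$ computation in Section \ref{S:example}), so your concentration claim is inconsistent with the bracket you say you obtain. Second, your concentration mechanism does not apply to your own scheme: a single random rotation is one global random choice, so a bounded-differences argument over vertices has no independent coordinates to work with, and concentration of $\hgk(\mathcal{S})$ about its mean is in any case unnecessary --- the paper needs only a Chernoff bound on $\Vol{S_j}$ for $j\leq k-1$ (which is where $\Delta/\Vol{G}=\lilOh{n^{-\beta}}$ enters) together with the pointwise inequality $\hgk(\mathcal{S})\geq\hgk$. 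To repair your argument you would have to either adopt the paper's affine per-vertex randomization or prove, for your geometric rounding, a per-edge cut-probability formula linear in $x_i(u)x_i(v)$; as written the central estimate is missing.
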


In addition, if we do not have $\lambda_{k-1}\leq 1$, we have the following related theorem.

\begin{theorem}\label{T:nonpos}
Fix a constant $k$. Let $G$ be an undirected graph on $n$ vertices, with maximum degree $\Delta$, and suppose there exists a constant $\beta>0$ such that $\frac{\Delta}{\Vol{G}}=\lilOh{n^{-\beta}}$. Let $0=\lambda_0\leq\lambda_1\leq\dots\leq\lambda_{k-1}$ be the first $k$ eigenvalues of $\L$, with corresponding harmonic eigenvectors $x_0, x_1, \dots, x_{k-1}$. Let $\alpha = \sum_{i=1}^{k-1}\|x_i\|_\infty$, and let $\Lambda=\frac{1}{k}\sum_{i=1}^{k-1}(1-\lambda_i)$%, and let $\Lambda_2=\frac{1}{k}\sum_{i=1}^{k-1}\lambda_i$
. 
Then the $k$-fold Cheeger constant $\hgk$ satisfies
\[\frac{1}{2}-\frac{\Lambda}{2}\leq\hgk\leq \left[\frac{1}{2}-\frac{1}{4k} -\frac{(k-1)\Lambda}{4\Vol{G}\alpha^2}\right](1+\lilOh{1}).\]

\end{theorem}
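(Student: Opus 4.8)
\textbf{Proof proposal for Theorem \ref{T:nonpos}.}

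The plan is to reduce Theorem \ref{T:nonpos} to the machinery already developed for Theorem \ref{T:main}, the only difference being that we can no longer assume $1-\lambda_i \geq 0$ for each $i$, so the pointwise eigenvector bounds must be aggregated differently. For the lower bound $\hgk \geq \frac{1}{2} - \frac{\Lambda}{2}$, I would argue exactly as in the $\lambda_{k-1} \leq 1$ case: for any partition $\mathcal{S} = \{S_1, \dots, S_k\}$, rewrite $h_G^{(k)}(\mathcal{S})$ in terms of the indicator vectors $\one_{S_i}$ and the Rayleigh quotients $\frac{\langle \one_{S_i}, \mL \one_{S_i}\rangle}{\langle \one_{S_i}, \bD \one_{S_i}\rangle}$, then use that the normalized indicators, suitably orthogonalized against the all-ones vector, span a $(k-1)$-dimensional (or $k$-dimensional including the trivial direction) subspace, so by the Courant--Fischer min-max characterization the average of the associated Rayleigh quotients is at least $\frac{1}{k}\sum_{i=1}^{k-1}\lambda_i$. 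A short manipulation converts $\frac{1}{k}\sum \lambda_i \leq (\text{something})$ into $\frac{1}{2} - \frac{\Lambda}{2} \leq h_G^{(k)}(\mathcal{S})$. Note this lower bound never used $\lambda_{k-1} \leq 1$, so it transfers verbatim.

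For the upper bound, the strategy is the usual one: exhibit a single explicit partition whose Cheeger value meets the claimed bound, built from the harmonic eigenvectors $x_1, \dots, x_{k-1}$. Following \cite{kenter2014linear}, I would define for each vertex $v$ a "type" by looking at the sign pattern (or a rounding) of $(x_1(v), \dots, x_{k-1}(v))$ — or more carefully, assign $v$ to part $S_i$ according to which coordinate is largest in absolute value, with a random sign/threshold to symmetrize — thereby producing a partition $\mathcal{S}$ into (at most) $k$ cells. One then estimates $\sum_{i\neq j} \frac{e(S_i,S_j)}{\min\{\Vol{S_i},\Vol{S_j}\}}$ by relating each cross-edge count to the quadratic form $\langle x_i, \mL x_i\rangle = \lambda_i \langle x_i, \bD x_i\rangle$, extracting a factor of the form $\frac{1}{2} - \frac{1}{4k} - (\text{positive multiple of } \sum(1-\lambda_i))$. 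The harmonic normalization of the eigenvectors (each $x_i$ is $\bD$-normalized, $\langle x_i, \bD x_i\rangle = \Vol{G}$ up to constants) is what makes the volumes and the $\Vol{G}$ in the denominator of $\frac{(k-1)\Lambda}{4\Vol{G}\alpha^2}$ appear. The condition $\frac{\Delta}{\Vol{G}} = \lilOh{n^{-\beta}}$ is used to absorb all lower-order contributions — boundary effects from the rounding, the discrepancy between $\min\{\Vol{S_i},\Vol{S_j}\}$ and $\frac{1}{k}\Vol{G}$ — into the $(1+\lilOh{1})$ factor.

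The one genuine change from Theorem \ref{T:main} is the role of $\alpha$. In Theorem \ref{T:main}, $\alpha = \max_i \|x_i\|_\infty$ suffices because each term $(1-\lambda_i)$ is nonnegative and can be bounded individually against $\|x_i\|_\infty^2$. Here, without $\lambda_{k-1}\leq 1$, the clean term-by-term bound fails, and the error analysis instead produces a sum $\sum_i (1-\lambda_i)\|x_i\|_\infty^2$ that one controls by pulling out $\alpha^2 = \left(\sum_i \|x_i\|_\infty\right)^2 \geq \sum_i \|x_i\|_\infty^2$; this is exactly why the statement of Theorem \ref{T:nonpos} replaces the max by the sum of the $\infty$-norms. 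So I would: (i) redo the upper-bound computation keeping the $\|x_i\|_\infty$ weights explicit; (ii) at the final step, invoke $\sum_i \|x_i\|_\infty^2 \leq \left(\sum_i \|x_i\|_\infty\right)^2 = \alpha^2$ (trivial since all terms are nonnegative) to collapse the bound into the stated form; and (iii) check that the $\lilOh{1}$ bookkeeping goes through unchanged, since the asymptotic hypothesis on $\Delta/\Vol{G}$ is untouched.

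\textbf{Main obstacle.} I expect the hard part to be the upper-bound rounding argument: controlling the cross-edges of the explicitly constructed partition in terms of the $\lambda_i$ \emph{and} the eigenvector $\infty$-norms simultaneously, so that the ``main term'' $\frac{1}{2} - \frac{1}{4k}$ emerges with precisely the right negative correction $-\frac{(k-1)\Lambda}{4\Vol{G}\alpha^2}$ rather than something weaker. This requires careful tracking of which geometric quantity (vertex degrees, partition volumes, eigenvector entries) each factor in the bound comes from, and verifying that the absence of the $\lambda_{k-1}\leq 1$ hypothesis genuinely costs only the passage from $\max \|x_i\|_\infty$ to $\sum \|x_i\|_\infty$ and nothing more. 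The lower bound, by contrast, should be a direct min-max argument with no new difficulty.
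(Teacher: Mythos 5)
Your lower bound is exactly the paper's argument (the orthonormal vectors $\bD^{1/2}\one_{S_i}/\sqrt{\Vol{S_i}}$ fed into the trace form of Courant--Fischer), and you correctly observe that it never uses $\lambda_{k-1}\leq 1$; no issues there.

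The upper bound is where the gap is, and it is twofold. First, the paper does not round deterministically by ``largest coordinate'': it assigns each vertex $v$ independently to part $j$ with probability $\frac{1-2\delta}{2(k-1)}+\frac{x_j(v)}{2(k-1)\alpha}$ (and to part $k$ otherwise), precisely so that $\e{e(S_j,S_j)}$ can be computed exactly as a quadratic form of the expectation vector via Proposition \ref{expquadform} and, separately, bounded through the Cheeger ratios after a Chernoff concentration step on the volumes. A sign-pattern or largest-coordinate rounding gives you access to neither computation, and you offer no mechanism for extracting the exact constants $\frac{1}{2}-\frac{1}{4k}$ and $\frac{(k-1)\Lambda}{4\Vol{G}\alpha^2}$ from it. Second, and more importantly, your step (ii) --- collapsing the eigenvector-dependent term using $\sum_i\|x_i\|_\infty^2\leq\left(\sum_i\|x_i\|_\infty\right)^2$ --- does not address the sign problem that is the whole point of Theorem \ref{T:nonpos}. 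The term that must be bounded from below is $\sum_j\frac{1-\lambda_j}{4\|x_j\|_\infty^2}$; when some $1-\lambda_j<0$, enlarging a denominator makes that summand \emph{more} negative, so no comparison between $\sum_j\|x_j\|_\infty^2$ and $\alpha^2$ yields the needed bound $\geq\frac{k\Lambda}{4\alpha^2}$. Since the signs of the $1-\lambda_j$ are arbitrary, the only way to obtain $\Lambda$ times a single constant is to make all the denominators equal, and that is exactly what the paper does: it modifies the sampling probabilities themselves, replacing $\|x_j\|_\infty$ by the common scale $\alpha=\sum_i\|x_i\|_\infty$ (which keeps the probabilities valid), so that the key term becomes the exact identity $\sum_j\frac{1-\lambda_j}{4\alpha^2}=\frac{k\Lambda}{4\alpha^2}$ with no inequality needed. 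That modification of the construction, rather than a post hoc norm inequality, is the genuine content of the theorem and is missing from your proposal.
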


These results have several interesting features. First, the upper bounds of previous higher-order Cheeger inequalities are generally not applicable when $\lambda_k \gg 1/k^2$ (consider the complete graph, for example). In contrast, under mild conditions, Theorem \ref{T:main} or \ref{T:nonpos} applies even if $\lambda_k \gg 1/k^2$. Additionally, the result demonstrates that the ``average case'' $k$-fold Cheeger constant is tightly controlled by the average of the first $k-1$ nontrivial eigenvalues whereas the previous ``worst case'' approaches tightly control $\hat h_G^{(k)}$ with $\lambda_k$. Finally, and perhaps most interesting, Theorem \ref{T:main} shows that the Cheeger ratio can be elegantly bounded to within a {\it linear} factor of the corresponding eigenvalues when the eigenvector norms are considered.

We note that although the bound in Theorem \ref{T:nonpos} appears much weaker than that of Theorem \ref{T:main}, it is in fact not necessarily weaker at all. Indeed, if $\Lambda$ is negative, that is, if the average of the first $k-1$ nontrivial eigenvalues is greater than 1, then Theorem \ref{T:nonpos} gives a stronger bound than Theorem \ref{T:main}, as the term involving $\Lambda$ will be positive in this case. Indeed, as seen in Section \ref{S:example}, the bound given in Theorem \ref{T:nonpos} is quite good for the complete graph $K_n$.

%Moreover, preliminary empirical results suggest the upper bound is tight for some families of graphs. In addition, as we will prove in Theorem \ref{T:lowerbound}, the lower bound holds without the restriction on $\Delta$ and $\Vol{G}$.

We present this article as follows. In Section \ref{S:pre}, we give relevant background and definitions. Then, we prove the lower bounds of Theorems \ref{T:main} and \ref{T:nonpos} in Section \ref{S:LB} and the upper bounds in Section \ref{S:UB}. Finally, we conclude with the example of applying our result to $K_n$ in Section \ref{S:example}.

\section{Preliminaries}\label{S:pre}

To prove the upper bound in Theorem \ref{T:main}, we shall use tools from both probability theory and graph theory. To begin, we define our graph-theoretic notation.

Given a graph $G$, define the adjacency matrix $\bA$ to be the square matrix, indexed by $V(G)$, with $\mathbf A_{u, v}=\ind_{u\sim v}$, the indicator of whether $\{u, v\}\in E(G)$. Define $\mathbf D$ to be the diagonal matrix indexed by $V(G)$ with $\mathbf D_{u, u}=\deg_G(u)$. For simplicity of notation, if the graph is understood, we write $d_u=\deg_G(u)$. The normalized Laplacian matrix, $\L$, is given by $\L=\bD^{-1/2}(\bI-\bA)\bD^{-1/2}$. By convention, if $G$ has an isolated vertex $u$, set $(\bD^{-1/2})_{u, u} = 0$. The eigenvalues of $\L$ will be written as $0=\lambda_0\leq \lambda_1\leq\dots\leq \lambda_{n-1}$.  For a matrix or vector $\mathbf{X}$, we let $\mathbf{X}^*$ denote the conjugate transpose of $\mathbf X$. 

For a subset $S\subset V(G)$, define $\Vol{S}=\sum_{v\in S}d_v$. Define $\Vol{G} = \Vol{V(G)}$. Write $\one_S$ to denote the vector indexed by $V(G)$ with $\one_S(v)=\ind_{v\in S}$, the indicator of whether $v$ is an element of $S$. For convenience of notation, we write $\one = \one_{V(G)}$. Given two subsets $S, T\subset V(G)$, define $e(S, T)$ to be the number of edges with one incident vertex in $S$ and the other incident vertex in $T$. It is a standard exercise in graph theory to verify the following:

\begin{prop} \label{edges} For $S, T$ subsets of $V(G)$, we have \[e(S, T)=(\bD^{1/2}\one_S)^*(\mathbf I-\L)(\bD^{1/2}\one_T).\]
\end{prop}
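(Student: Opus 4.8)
The plan is to expand the right-hand side directly and recognize it as a sum over ordered pairs of adjacent vertices. First I would use the definition $\L = \bD^{-1/2}(\bI - \bA)\bD^{-1/2}$ to compute $\bI - \L = \bD^{-1/2}(\bD - (\bI-\bA))\bD^{-1/2} \cdot$— more carefully, since $\bD^{-1/2}\bD\bD^{-1/2} = \bI$ on the non-isolated vertices, one does not get a clean cancellation that way. Instead the cleaner route is to observe that $(\bD^{1/2}\one_S)^*(\bI - \L)(\bD^{1/2}\one_T) = (\bD^{1/2}\one_S)^*(\bD^{1/2}\one_T) - (\bD^{1/2}\one_S)^*\L(\bD^{1/2}\one_T)$, and then use $\L = \bD^{-1/2}(\bI-\bA)\bD^{-1/2}$ so that $(\bD^{1/2}\one_S)^*\L(\bD^{1/2}\one_T) = \one_S^*(\bI - \bA)\one_T = \one_S^*\one_T - \one_S^*\bA\one_T$. (Here the factors $\bD^{1/2}$ and $\bD^{-1/2}$ cancel on non-isolated vertices, and isolated vertices contribute $0$ to every term, so the identity holds on all of $V(G)$.)

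Next I would simplify each piece. We have $\one_S^*\bA\one_T = \sum_{u\in S}\sum_{v\in T}\bA_{u,v} = \sum_{u\in S, v\in T}\ind_{u\sim v}$, which counts ordered adjacent pairs $(u,v)$ with $u\in S$, $v\in T$; this is exactly $e(S,T)$ by the definition in the excerpt (note $e(S,T)$ as defined counts edges with one endpoint in each set, and the ordered-pair count agrees because the two sets need not be disjoint—one should be slightly careful here and check the edge cases, but the definition given is symmetric and this is the standard convention). For the diagonal-type terms, $(\bD^{1/2}\one_S)^*(\bD^{1/2}\one_T) = \one_S^*\bD\one_T = \sum_{v\in S\cap T} d_v$ and $\one_S^*\one_T = |S\cap T|$. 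Assembling, the right-hand side equals $\sum_{v\in S\cap T} d_v - |S\cap T| + e(S,T)$.

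The one subtlety—and the main thing to get right—is reconciling this with the bare claim $e(S,T) = (\bD^{1/2}\one_S)^*(\bI-\L)(\bD^{1/2}\one_T)$: the extra terms $\sum_{v\in S\cap T}d_v - |S\cap T|$ must vanish, which happens precisely when $S\cap T = \emptyset$, or more generally when one interprets $e(S,T)$ with loops/self-incidences included so that a vertex $v\in S\cap T$ contributes $d_v$ (for its edges) and the $-|S\cap T|$ corrects an off-by-one in the diagonal of $\bI - \bA$. Since in all applications in this paper (Proposition \ref{edges} is used with the parts $S_i, S_j$ of a partition, which are disjoint) we only ever apply it to disjoint sets, I would either state the proposition for disjoint $S, T$ or simply note that for disjoint sets $S\cap T = \emptyset$ kills the extra terms and leaves exactly $e(S,T)$. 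This is the only place any real care is needed; the rest is the routine bilinear-form expansion sketched above.
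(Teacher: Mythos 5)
Your bilinear-form expansion is algebraically sound, but your diagnosis of the ``extra terms'' leads you to a version of the proposition that the paper cannot use. You propose to rescue the identity by restricting to disjoint $S,T$, on the grounds that the paper only ever applies it to distinct parts of a partition. That is not so: both the lower-bound argument (which computes $g_i^*\L g_i = 1 - e(S_i,S_i)/\Vol{S_i}$) and the upper-bound argument (which computes $\e{e(S_j,S_j)}$ via Proposition \ref{expquadform}) invoke Proposition \ref{edges} with $S=T$. A disjointness hypothesis would therefore break the rest of the paper, so the leftover quantity $\sum_{v\in S\cap T}d_v-|S\cap T|$ that your computation produces cannot simply be legislated away.

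The actual resolution is that the displayed formula $\L=\bD^{-1/2}(\bI-\bA)\bD^{-1/2}$ is a typo for the standard normalized Laplacian $\L=\bD^{-1/2}(\bD-\bA)\bD^{-1/2}=\bI-\bD^{-1/2}\bA\bD^{-1/2}$; indeed, under the literal definition $\bD^{1/2}\one$ would not be a $0$-eigenvector, contradicting the paper's own statement that $\bv_0=(\Vol{G})^{-1/2}\bD^{1/2}\one$. With the corrected definition one has $\bI-\L=\bD^{-1/2}\bA\bD^{-1/2}$, the two diagonal terms you computed never arise, and the right-hand side collapses to $\one_S^*\bA\one_T=\#\{(u,v)\,:\,u\in S,\ v\in T,\ u\sim v\}$ for arbitrary $S,T$. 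The remaining point --- which you correctly flag but do not resolve --- is the counting convention for $e(S,T)$ when $S\cap T\neq\emptyset$: the paper needs the ordered-pair count, so that $e(S,S)$ is twice the number of internal edges, as is visible from its use of $e(S_i,S_i)+e(S_i,\overline{S_i})=\Vol{S_i}$ in Section \ref{S:LB}. With that convention the identity holds for all $S,T$ with no disjointness assumption. (The paper offers no proof of this proposition, so there is no argument of theirs to compare against; the expansion you chose is the natural one, and only the diagnosis at the end needs repair.)
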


Given an eigenvalue-eigenvector pair $(\lambda, \mathbf v)$ for $\L$, define the harmonic eigenvector corresponding to $\lambda$ to be $\mathbf D^{-1/2}\mathbf v$. Harmonic eigenvectors can be a useful tool for analyzing the normalized Laplacian. More information about harmonic eigenvectors and their uses can be found in \cite{chung1997spectral}, for example. For any graph $G$, there is always an orthonormal basis of eigenvectors $\mathbf v_0, \mathbf v_1, \dots, \mathbf v_{n-1}$; we shall assume throughout that such a basis has been chosen, and the harmonic eigenvectors used have been derived from this basis, so that the harmonic eigenvector corresponding to $\lambda_i$ will be precisely $\bD^{-1/2}\mathbf v_i$. We note that $\mathbf v_0 = (\Vol{G})^{-1/2}\bD^{1/2}\one$.

In addition, we shall require the following tools from probability theory. Given a random variable $X$, we use $\e{X}$ to denote the expected value of $X$. If $A$ is an event in a probability space, we use $\ind_A$ to denote the $0-1$ indicator random variable for $A$. Given a matrix $\mathbf M$ whose entries are all random variables, $\e{\mathbf{M}}$ denotes the matrix of entry-wise expectation. We shall use the following result from \cite{kenter2014linear}:

\begin{prop}\label{expquadform}
Let $\mathbf{x}\in \C^n$ be a random vector whose entries are pairwise independent, and let $\mu=\e{\mathbf{x}}$. If $\bA$ is an $n\times n$ symmetric matrix with $\bA_{ii}=0$ for all $i$, then \[\e{\mathbf{x}^*\bA\mathbf{x}}=\mu^*\bA\mu.\]
\end{prop}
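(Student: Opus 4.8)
The plan is to expand the quadratic form coordinate-wise and then apply linearity of expectation together with pairwise independence. Write $\mathbf{x}=(x_1,\dots,x_n)$ and $\bA=(A_{ij})$, so that $\mathbf{x}^*\bA\mathbf{x}=\sum_{i,j}\overline{x_i}\,A_{ij}\,x_j$. Since $\bA_{ii}=0$ for every $i$, the diagonal contributions vanish and only the off-diagonal terms with $i\neq j$ remain: $\mathbf{x}^*\bA\mathbf{x}=\sum_{i\neq j}\overline{x_i}\,A_{ij}\,x_j$.

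Next I would take expectations of this finite sum term by term, obtaining $\e{\mathbf{x}^*\bA\mathbf{x}}=\sum_{i\neq j}A_{ij}\,\e{\overline{x_i}\,x_j}$. The crucial step is that for $i\neq j$ the coordinates $x_i$ and $x_j$ are independent by hypothesis, so $\e{\overline{x_i}\,x_j}=\e{\overline{x_i}}\,\e{x_j}=\overline{\mu_i}\,\mu_j$, where $\mu=\e{\mathbf{x}}$. Note that this uses only \emph{pairwise} independence, which is exactly why that weaker hypothesis suffices: every term in the expanded quadratic form involves precisely two of the coordinates.

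Finally I would reassemble the sum: $\sum_{i\neq j}A_{ij}\,\overline{\mu_i}\,\mu_j=\sum_{i,j}A_{ij}\,\overline{\mu_i}\,\mu_j=\mu^*\bA\mu$, where the first equality re-inserts the (zero) diagonal terms. This gives the claimed identity. There is no genuine obstacle; the only points requiring a little care are keeping track of the conjugation in $\mathbf{x}^*$, and observing that the hypothesis $\bA_{ii}=0$ is precisely what lets us discard the diagonal terms so that no knowledge of the second moments $\e{\abs{x_i}^2}$ is needed.
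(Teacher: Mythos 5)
Your proof is correct and complete: expanding the quadratic form, discarding the diagonal (which is where $\bA_{ii}=0$ is used), and factoring $\e{\overline{x_i}x_j}=\overline{\mu_i}\mu_j$ for $i\neq j$ via pairwise independence is exactly the argument this statement calls for, and you correctly identify why pairwise independence suffices. The paper itself gives no proof of this proposition --- it is quoted from \cite{kenter2014linear} --- so there is nothing to compare against, but your argument is the standard one (note, as a minor aside, that symmetry of $\bA$ is never actually needed; only the vanishing diagonal is).
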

 
 Given a random variable $X$, we say $X$ has a Bernoulli distribution with parameter $p$ if $\p(X=1)=p$, and $\p(X=0)=1-p$. We write $X\sim \Ber{p}$. 

In addition, we shall make use of Chernoff bounds. Chernoff bounds are a class of concentration inequalities that consider sums of independent random variables. Often the variables considered are Bernoulli, though that may not be the case here. There are many versions of Chernoff bounds (see, for example, \cite{AlonandSpencer}); we shall use the following:

\begin{prop}\label{chernoff}
For $i=1, 2, \dots, k$, let $X_i$ be a nonnegative random variable with $X_i\leq \Delta$. Let $S=\sum X_i$, and let $\mu=\e{S}$. Then for any $\eps>0$,
\[\p(|S-\mu|>\eps\mu)\leq 2\exp\left(\frac{-\eps^2\mu}{3\Delta}\right).\]
\end{prop}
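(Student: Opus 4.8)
The plan is to establish this by the standard exponential-moment (Cram\'er--Chernoff) method, after normalizing the summands to the unit interval; throughout I assume, as the surrounding discussion makes clear, that the $X_i$ are independent, since the stated bound is false without an independence hypothesis. First I would rescale by setting $Y_i = X_i/\Delta$, so that each $Y_i$ takes values in $[0,1]$, and writing $T = \sum_i Y_i = S/\Delta$ with $\nu = \e{T} = \mu/\Delta$. Because the event $\{|S-\mu| > \eps\mu\}$ coincides with $\{|T-\nu| > \eps\nu\}$ and the target exponent satisfies $\eps^2\mu/(3\Delta) = \eps^2\nu/3$, it suffices to prove $\p(|T-\nu| > \eps\nu) \le 2\exp(-\eps^2\nu/3)$ for independent $[0,1]$-valued summands.

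For the upper tail I would bound, for a free parameter $t > 0$, the quantity $\p(T \ge (1+\eps)\nu)$ by applying Markov's inequality to $e^{tT}$ and using independence to factor $\e{e^{tT}} = \prod_i \e{e^{tY_i}}$. The essential estimate is the convexity bound $e^{ty} \le 1 + (e^t-1)y$ for $y \in [0,1]$, which gives $\e{e^{tY_i}} \le 1 + (e^t-1)\e{Y_i} \le \exp((e^t-1)\e{Y_i})$ and hence $\e{e^{tT}} \le \exp((e^t-1)\nu)$. This produces $\p(T \ge (1+\eps)\nu) \le \exp((e^t - 1 - t(1+\eps))\nu)$, and optimizing at $t = \ln(1+\eps)$ yields $\exp(-\nu[(1+\eps)\ln(1+\eps) - \eps])$. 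The lower tail is treated symmetrically, applying Markov to $e^{-tT}$, to obtain $\p(T \le (1-\eps)\nu) \le \exp(-\nu[(1-\eps)\ln(1-\eps) + \eps])$.

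To recover the clean form I would then invoke the elementary inequalities $(1+\eps)\ln(1+\eps) - \eps \ge \eps^2/3$ and $(1-\eps)\ln(1-\eps) + \eps \ge \eps^2/2 \ge \eps^2/3$, both valid for $\eps \in (0,1]$ (with the lower-tail probability being identically zero once $\eps \ge 1$, since $T \ge 0$). Each tail is thereby at most $\exp(-\eps^2\nu/3)$, so a union bound gives $\p(|T-\nu| > \eps\nu) \le 2\exp(-\eps^2\nu/3)$; substituting $\nu = \mu/\Delta$ recovers the statement. I would note explicitly that this clean form is an $\eps \le 1$ statement, which is the only regime used in the applications.

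I expect the sole delicate point to be the two calculus inequalities in the final step: proving $(1+\eps)\ln(1+\eps) - \eps \ge \eps^2/3$ and its lower-tail analogue, which one verifies by observing that the difference of the two sides vanishes at $\eps = 0$ and has nonnegative derivative on $(0,1]$. This is where the constant $3$ originates and where the restriction on the range of $\eps$ becomes essential; the remaining ingredients --- the rescaling, the convexity bound, the factorization over independent factors, and the union bound --- are entirely routine.
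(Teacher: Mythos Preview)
Your argument is the standard Cram\'er--Chernoff proof and is correct; the paper itself does not prove this proposition at all but simply quotes it as a known concentration inequality with a reference to Alon--Spencer. You are also right to flag that independence of the $X_i$ is an unstated hypothesis and that the constant $3$ in the exponent is only valid in the regime $\eps\le 1$ (the paper's later application indeed takes $\eps=n^{-\beta/3}\to 0$), so your treatment is in fact more careful than the paper's statement.
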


In addition, to prove the lower bound in Theorems \ref{T:main} and \ref{T:nonpos}, we shall make use of the following linear algebra theorem. This result can be derived as a corollary of the Courant-Fischer Theorem, and can be found, for example, as Corollary 4.3.18 in \cite{HornandJohnson}.

\begin{theorem}\label{courantfischer}
Let $\mathbf M$ be an $n\times n$ Hermitian matrix with eigenvalues $\lambda_0\leq \lambda_1\leq \dots\leq \lambda_{n-1}$. Fix $k\leq n$, and let $\mathcal{U}_{n, k}$ denote the set of $n\times k$ complex matrices with orthonormal columns. Then
\[\sum_{i=0}^{k-1}\lambda_i = \min_{\mathbf{U}\in \mathcal U_{n, k}}\tr{\mathbf{U}^*\mathbf{M}\mathbf{U}}.\]
\end{theorem}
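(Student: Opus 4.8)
The plan is to prove this by the standard two-sided argument underlying Ky Fan's trace minimization principle. First I would fix an orthonormal eigenbasis $\mathbf{v}_0, \mathbf{v}_1, \dots, \mathbf{v}_{n-1}$ of $\mathbf{M}$ with $\mathbf{M}\mathbf{v}_i=\lambda_i\mathbf{v}_i$, which exists because $\mathbf{M}$ is Hermitian.

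For the inequality ``$\leq$'' I would exhibit a witness: take $\mathbf{U}_0\in\mathcal{U}_{n,k}$ to be the matrix whose columns are $\mathbf{v}_0, \dots, \mathbf{v}_{k-1}$. Then $\mathbf{U}_0^*\mathbf{M}\mathbf{U}_0$ is the $k\times k$ diagonal matrix with diagonal entries $\lambda_0, \dots, \lambda_{k-1}$, so $\tr{\mathbf{U}_0^*\mathbf{M}\mathbf{U}_0}=\sum_{i=0}^{k-1}\lambda_i$, and hence $\min_{\mathbf{U}\in\mathcal{U}_{n,k}}\tr{\mathbf{U}^*\mathbf{M}\mathbf{U}}\leq\sum_{i=0}^{k-1}\lambda_i$.

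For the reverse inequality I would take an arbitrary $\mathbf{U}=[\mathbf{u}_1\,|\,\cdots\,|\,\mathbf{u}_k]\in\mathcal{U}_{n,k}$, set $p_{ij}=\size{\inner{\mathbf{u}_j}{\mathbf{v}_i}}^2$, and expand each column in the eigenbasis to obtain $\mathbf{u}_j^*\mathbf{M}\mathbf{u}_j=\sum_{i=0}^{n-1}\lambda_i p_{ij}$, so that $\tr{\mathbf{U}^*\mathbf{M}\mathbf{U}}=\sum_{j=1}^k\mathbf{u}_j^*\mathbf{M}\mathbf{u}_j=\sum_{i=0}^{n-1}\lambda_i c_i$ with $c_i=\sum_{j=1}^k p_{ij}$. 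The two facts I need about the vector $\mathbf{c}$ are that $\sum_{i=0}^{n-1}c_i=\sum_{j=1}^k\|\mathbf{u}_j\|^2=k$ (the columns are unit vectors) and $0\leq c_i\leq 1$ for every $i$ (the upper bound being Bessel's inequality for the orthonormal system $\mathbf{u}_1, \dots, \mathbf{u}_k$ tested against the unit vector $\mathbf{v}_i$). Since the $\lambda_i$ are nondecreasing, a short exchange argument shows that $\mathbf{c}\mapsto\sum_i\lambda_i c_i$ is minimized over the polytope $\set{\mathbf{c}\in\R^n:0\leq c_i\leq 1,\ \sum_i c_i=k}$ at $c_0=\cdots=c_{k-1}=1$ and $c_i=0$ otherwise, so $\tr{\mathbf{U}^*\mathbf{M}\mathbf{U}}\geq\sum_{i=0}^{k-1}\lambda_i$; minimizing over $\mathbf{U}$ and combining with the previous paragraph gives the equality.

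The only step requiring genuine care is the final minimization of $\sum_i\lambda_i c_i$ over that constraint set — equivalently, the observation that $\mathbf{c}$ is majorized by the indicator vector of $\set{0,1,\dots,k-1}$ — and I expect this (still routine) exchange argument to be the main obstacle. An alternative route would be to induct on $k$ using the single-eigenvalue Courant–Fischer min-max formula, peeling off one optimal direction at a time, but the direct computation above seems cleaner and more self-contained.
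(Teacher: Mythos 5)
Your proposal is correct, but it cannot be compared against a proof in the paper because the paper offers none: Theorem \ref{courantfischer} is invoked as a known result (Corollary 4.3.18 of \cite{HornandJohnson}), with only the remark that it can be derived from the Courant--Fischer theorem. What you have written is the standard self-contained proof of Ky Fan's trace-minimization principle, and every step checks out: the witness $\mathbf{U}_0$ gives the upper bound; Parseval gives $\sum_i c_i = k$; Bessel's inequality applied to the orthonormal system $\mathbf u_1,\dots,\mathbf u_k$ against $\mathbf v_i$ gives $c_i\leq 1$; and the reduction of the problem to minimizing the linear functional $\mathbf c\mapsto\sum_i\lambda_i c_i$ over the polytope $\set{0\leq c_i\leq 1,\ \sum_i c_i=k}$ is exactly right. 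The one step you leave informal, the exchange argument, is indeed routine and even admits a one-line verification in place of an exchange: writing $\sum_i\lambda_i c_i-\sum_{i<k}\lambda_i=\sum_{i<k}\lambda_i(c_i-1)+\sum_{i\geq k}\lambda_i c_i$ and using $\lambda_i\leq\lambda_{k-1}$ with $c_i-1\leq 0$ in the first sum and $\lambda_i\geq\lambda_{k-1}$ with $c_i\geq 0$ in the second, the right-hand side is at least $\lambda_{k-1}\left(\sum_{i<k}(c_i-1)+\sum_{i\geq k}c_i\right)=\lambda_{k-1}(k-k)=0$. With that filled in, your argument is complete and arguably more self-contained than the route the paper gestures at (induction via Courant--Fischer), at the modest cost of introducing the doubly-substochastic bookkeeping; it also directly yields the rephrased form the paper actually uses in Section \ref{S:LB}, namely $\sum_{i=1}^k\bvf_i^*\mathbf M\bvf_i\geq\sum_{i=0}^{k-1}\lambda_i$ for any orthonormal $\bvf_1,\dots,\bvf_k$.
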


\newcommand{\bvf}{\mathbf{f}}

Rephrased, this theorem states that if $\bvf_1, \bvf_2, \dots, \bvf_k$ is a collection of orthonormal vectors in $\C^n$ and $\mathbf M$ is a Hermitian matrix, then \[\displaystyle\sum_{i=1}^k \bvf_i^*\mathbf{M}\bvf_i \geq \displaystyle\sum_{i=0}^{k-1}\lambda_i.\]

\section{The Lower Bound}\label{S:LB}

In this section, we prove the lower bound from Theorems \ref{T:main} and \ref{T:nonpos}. %Throughout, we shall take $\Lambda_2=\frac{1}{k}\sum_{i=0}^{k-1}\lambda_i$, the average of the first $k$ eigenvalues of $\L$. 
Recall that  $\Lambda =\frac{1}{k}\sum_{i=1}^{k-1}(1-\lambda_i)$, the average of the largest $k$ eigenvalues of $\bI-\L$. 

\begin{theorem}\label{T:lowerbound}
Given $k\geq 2$, \[\hgk\geq \frac{1}{2} - \frac{\Lambda}{2}.\]
\end{theorem}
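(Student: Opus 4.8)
The plan is to lower‑bound $\hgk(\mathcal S)$ for an \emph{arbitrary} partition $\mathcal S=\{S_1,\dots,S_k\}$ of $V(G)$ and then take the infimum, using the variational inequality of Theorem~\ref{courantfischer}. The test vectors I would feed into it are the degree‑weighted, normalized indicator vectors $\bvf_i=\Vol{S_i}^{-1/2}\,\bD^{1/2}\one_{S_i}$ (we may assume $\Vol{S_i}>0$ for every $i$, since a part of volume $0$ contributes nothing to $\hgk(\mathcal S)$). Because the $S_i$ are pairwise disjoint, $\one_{S_i}^*\bD\one_{S_j}=\delta_{ij}\Vol{S_i}$, so $\bvf_1,\dots,\bvf_k$ are orthonormal, and the rephrased form of Theorem~\ref{courantfischer} applied with $\mathbf M=\L$ gives
\[
\sum_{i=1}^k\bvf_i^*\L\bvf_i\ \ge\ \sum_{i=0}^{k-1}\lambda_i\ =\ \sum_{i=1}^{k-1}\lambda_i ,
\]
where the last equality uses $\lambda_0=0$.

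Next I would rewrite the left‑hand side as a sum of Cheeger ratios. Writing $\L=\bI-(\bI-\L)$ and using Proposition~\ref{edges} (with $S=T=S_i$) together with $(\bD^{1/2}\one_{S_i})^*(\bD^{1/2}\one_{S_i})=\one_{S_i}^*\bD\one_{S_i}=\Vol{S_i}$, one gets $\bvf_i^*\L\bvf_i=\Vol{S_i}^{-1}\bigl(\Vol{S_i}-e(S_i,S_i)\bigr)$. Splitting the degree sum over $S_i$ according to whether each incident edge stays inside $S_i$ or crosses out of it yields $\Vol{S_i}=e(S_i,S_i)+\sum_{j\ne i}e(S_i,S_j)$, hence $\bvf_i^*\L\bvf_i=\Vol{S_i}^{-1}\sum_{j\ne i}e(S_i,S_j)$. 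Summing over $i$ and using $\min\{\Vol{S_i},\Vol{S_j}\}\le\Vol{S_i}$ term by term,
\[
k\,\hgk(\mathcal S)=\sum_{i\ne j}\frac{e(S_i,S_j)}{\min\{\Vol{S_i},\Vol{S_j}\}}\ \ge\ \sum_{i\ne j}\frac{e(S_i,S_j)}{\Vol{S_i}}\ =\ \sum_{i=1}^{k}\bvf_i^*\L\bvf_i\ \ge\ \sum_{i=1}^{k-1}\lambda_i .
\]
Dividing through by $k$ and taking the infimum over all partitions gives $\hgk\ge\frac1k\sum_{i=1}^{k-1}\lambda_i$; rewriting the right‑hand side through the definition $\Lambda=\frac1k\sum_{i=1}^{k-1}(1-\lambda_i)$ gives the claimed bound (and for $k=2$, where $\hgk=h_G$ and the bound reads $h_G\ge\lambda_1/2$, this is exactly Cheeger's inequality).

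I do not expect a genuine obstacle here: this is the ``easy direction,'' and the $\min$ in the denominator of $\hgk(\mathcal S)$ only helps, since it makes $\hgk(\mathcal S)$ \emph{at least} the spectral quantity $\frac1k\sum_i\bvf_i^*\L\bvf_i$ it is compared against. The only points that call for care are (i) the edge‑count identity $\Vol{S_i}=e(S_i,S_i)+\sum_{j\ne i}e(S_i,S_j)$ together with the convention, implicit in Proposition~\ref{edges}, that $e(S_i,S_i)$ counts each internal edge of $S_i$ twice; and (ii) keeping the ordered double sums $\sum_{i\ne j}$ lined up across the step where $\min\{\Vol{S_i},\Vol{S_j}\}$ is replaced by $\Vol{S_i}$. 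Unlike the worst‑case higher‑order Cheeger inequalities, no clever construction of test vectors (random partitions, spectral embeddings, and so on) is needed: the plain indicator vectors of the partition are precisely what Theorem~\ref{courantfischer} consumes.
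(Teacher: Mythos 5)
Your argument is, step for step, the paper's own proof: the same orthonormal test vectors $\bvf_i=\Vol{S_i}^{-1/2}\bD^{1/2}\one_{S_i}$, the same appeal to Theorem~\ref{courantfischer}, and the same identity $\bvf_i^*\L\bvf_i=1-e(S_i,S_i)/\Vol{S_i}=\sum_{j\neq i}e(S_i,S_j)/\Vol{S_i}$ followed by $\min\{\Vol{S_i},\Vol{S_j}\}\leq\Vol{S_i}$. Everything up to the inequality $\hgk\geq\frac{1}{k}\sum_{i=1}^{k-1}\lambda_i$ is sound, modulo one bookkeeping point: the paper's final line ``$\leq 2\hgk$'' and its $K_n$ computation treat the $\sum_{i\neq j}$ in the definition of $\hgk(\mathcal{S})$ as running over \emph{unordered} pairs, under which convention your chain yields $\frac{1}{2k}\sum_{i=1}^{k-1}\lambda_i$ rather than $\frac{1}{k}\sum_{i=1}^{k-1}\lambda_i$.

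The genuine problem is your last sentence. The bound you derived does not ``rewrite'' to the claimed one: with $\Lambda=\frac{1}{k}\sum_{i=1}^{k-1}(1-\lambda_i)$, which omits the $i=0$ term while still dividing by $k$, one has $\frac{1}{k}\sum_{i=1}^{k-1}\lambda_i=\frac{k-1}{k}-\Lambda$, not $1-\Lambda$; so what the argument actually proves is $\hgk\geq\frac{k-1}{2k}-\frac{\Lambda}{2}$, weaker than the stated $\frac{1}{2}-\frac{\Lambda}{2}$ by exactly $\frac{1}{2k}$. This is not a gap you could have closed: with the displayed definition of $\Lambda$ the statement is false --- a graph with $k$ connected components, partitioned into them, has $\hgk=0$ while $\Lambda=\frac{k-1}{k}$ makes $\frac{1}{2}-\frac{\Lambda}{2}=\frac{1}{2k}>0$. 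The paper's proof commits the same off-by-one when it asserts $\sum_{i=0}^{k-1}\lambda_i=k(1-\Lambda)$ (that identity holds only if $\Lambda$ includes the $i=0$ term, as the prose ``average of the largest $k$ eigenvalues of $\bI-\L$'' suggests was intended); tellingly, the $K_n$ example in Section~\ref{S:example} uses the corrected bound $\frac{1}{2k}\sum_{i=1}^{k-1}\lambda_i$, not the stated one. So: same route as the paper, correct up to the final translation into $\Lambda$, and the one step you wave through is precisely the step that fails, both in your write-up and in the paper.
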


\begin{proof}
Let $\mathcal S = \{S_1, S_2, \dots, S_k\}$ be a partition of the vertices of $G$. For $1\leq j\leq k$, let $g_j = \mathbf D^{1/2}\frac{1}{\sqrt{\Vol{S_i}}} \one_{S_i}$. Note that as the $S_i$ are all disjoint, we have that the set $\{g_j\}$ is orthogonal. Moreover, $\|g_j\|^2 = \sum_{v\in S_i} \frac{d_v}{\Vol{S_i}}=1$, and hence the set $\{g_j\}$ is in fact orthonormal. Then by Theorem \ref{courantfischer}, we have
\[ \sum_{i=1}^k (g_i^*\L g_i)\geq \sum_{i=0}^{k-1}\lambda_i = k (1-\Lambda).\]

%\begin{eqnarray*}
%\left(\sum_{i=1}^k g_i\right)^*\L \left(\sum_{i=1}^k g_i\right) & = & \sum_{i=1}^k (g_i^*\L g_i)+\sum_{i\neq j} (g_i^*\L g_j+g_j^*\L g_i)\\
%& \geq &  \sum_{i=0}^{k-1} \lambda_i - \sum_{i\neq j}( g_i^*(I-\L)g_j + g_j^*(I-\L)g_i - g_i^*g_j - g_j^*g_i)\\
%& \geq & \sum_{i=0}^{k-1} \lambda_i - \sum_{i\neq j} \left(\frac{\overline\omega_{i}\omega_{j} + \overline\omega_{j}\omega_{i}}{\sqrt{\Vol{S_i}\Vol{S_j}}}\right)e(S_i, S_j)\\
%& = & \sum_{i=0}^{k-1} \lambda_i - \sum_{i\neq j}2\Re(\overline\omega_{i}\omega_{j})\frac{e(S_i, S_j)}{\sqrt{\Vol{S_i}\Vol{S_j}}}
%\end{eqnarray*}

On the other hand, by Proposition \ref{edges}, we have
\[ g_i^*\L g_i  = g_i^*g_i - g_i^*(\mathbf I-\L)g_i = 1- \frac{e(S_i, S_i)}{\Vol{S_i}}.\]

%\begin{eqnarray*}
%\left(\sum_{i=1}^k g_i\right)^*\L \left(\sum_{i=1}^k g_i\right) & = &\left\langle \bD^{1/2}\left(\sum_{i=1}^k \frac{\omega_i}{\sqrt{\Vol{S_i}}} \one_{S_i}\right), (\bI-\bD^{-1/2}\bA\bD^{-1/2})\bD^{1/2}\left(\sum_{i=1}^k \frac{\omega_i}{\sqrt{\Vol{S_i}}} \one_{S_i}\right)\right\rangle\\
%& = & \left\langle\sum_{i=1}^k \frac{\omega_i}{\sqrt{\Vol{S_i}}} \one_{S_i}, (\bD-\bA)\sum_{i=1}^k \frac{\omega_i}{\sqrt{\Vol{S_i}}} \one_{S_i}\right\rangle\\
%& = & \sum_{i=1}^k\sum_{v\in S_i} \frac{\overline\omega_i}{\sqrt{\Vol{S_i}}}\left(\frac{d_v\omega_i}{\sqrt{\Vol{S_i}}} - \sum_{j=1}^k e(v, S_j) \frac{\omega_j}{\sqrt{\Vol{S_j}}}\right)\\
%& = & \sum_{i=1}^k\left( 1 - \sum_{j=1}^k e(S_i, S_j)\frac{\overline\omega_i\omega_j}{\sqrt{\Vol{S_i}\Vol{S_j}}}\right)\\
%& = & k - \sum_{i=1}^k \frac{e(S_i, S_i)}{\Vol{S_i}} -2 \sum_{i\neq j} \Re(\omega_i\overline\omega_j)\frac{e(S_i, S_j)}{\sqrt{\Vol{S_i}\Vol{S_j}}},\end{eqnarray*}

Combining these two results yields \begin{eqnarray*}
1-\Lambda & \leq & \frac{1}{k}\left(\sum_{i=1}^k \left(1-\frac{e(S_i, S_i)}{\Vol{S_i}}\right)\right)\\
 & = &\frac{1}{k}\left( \sum_{i=1}^k\left(1-\frac{e(S_i, S_i)+e(S_i, \overline{S_i})}{\Vol{S_i}} + \sum_{j\neq i} \frac{e(S_i, S_j)}{\Vol{S_i}}\right)\right)\\
 & = & \frac{1}{k}\left( \sum_{i=1}^k\left(1-\frac{\Vol{S_i}}{\Vol{S_i}} + \sum_{j\neq i} \frac{e(S_i, S_j)}{\Vol{S_i}}\right)\right)\\
 & = & \frac{1}{k}\sum_{j\neq i} \left(\frac{e(S_i, S_j)}{\Vol{S_i}}+ \frac{e(S_i, S_j)}{\Vol{S_j}}\right)\\
 & \leq & 2\hgk,
\end{eqnarray*}
as desired.
\end{proof}

\section{The Upper Bound}\label{S:UB}

The proof of the upper bound in Theorem \ref{T:main} is modeled after the proof of a similar upper bound by the first author in \cite{kenter2014linear} when $k=2$. The strategy employed is to choose a vector randomly so that the expectation of this vector has useful algebraic properties, and apply concentration results for the expectation.

\begin{proof}[Proof of Theorem \ref{T:main}, upper bound]
Let $\mathbf{x}_1, \mathbf{x}_2, \dots, \mathbf{x}_{k-1}$ be the first $k-1$ nontrivial harmonic eigenvectors for $G$, with corresponding eigenvalues $\lambda_1, \lambda_2, \dots, \lambda_{k-1}$. As above, we shall assume that $\mathbf x_i=\mathbf D^{-1/2}\mathbf v_i$, where the set $\{\bv_0, \bv_1, \dots, \bv_n\}$ is an orthonormal basis for $\R^n$ composed of eigenvectors of $\L$. Recall that $\mathbf v_0=\frac{1}{\sqrt{\Vol{G}}}\bD^{1/2}\one$, and hence for each $i$, 
\[ \sum_{v\in V(G)} \mathbf{x}_i(v)d_v = \mathbf{x}_i^*\bD\one = \sqrt{\Vol{G}}(\bD^{-1/2}\bv_i)^*\bD^{1/2}\bv_0 = \sqrt{\Vol{G}} \bv_i^*\bv_0=0,\]
so each $\mathbf{x}_i$ is orthogonal to $D\one$.
 Let $\alpha = \max\{ \|\mathbf{x}_j\|_{\infty}\ |\ j=1, 2, \dots, k\}$.

Let $\delta>0$ be a constant that will be defined later. For all $v\in V(G)$, define a random variable $s_v$ by 
\[\begin{array}{l} s_v = j \hbox{ with probability } \frac{1-2\delta}{2(k-1)}+\frac{\mathbf{x}_j(v)}{2(k-1)\|\mathbf{x}_j\|_\infty} \hbox{ for }j=1, 2, \dots, k-1\\ s_v=k \hbox{ otherwise }\end{array}\]

Note that the random variables for different vertices are independent. For each $j=1, 2, \dots, k$, define $S_j = \{v\in V\ | \ s_v=j\}$. Thus we can view the random variables $s_v$ as partitioning the vertices of $G$ into $k$ sets. Let $\mathbf{w}_j=\one_{S_j}$, the indicator vector for the set $S_j$. As the choice of set $S_j$ for each vertex $v$ is independent of each other vertex, we have that the entries in $\mathbf{w}_j$ are pairwise independent (although any pair $\mathbf{w}_j$, $\mathbf w_\ell$ are not independent). Notice that for $j=1, 2, \dots, k-1$, we have
\begin{eqnarray*}
\e{\Vol{S_j}} & = & \sum_{v\in V} \p(v\in S_j) d_v\\
& = & \sum_{v\in V}\left(\left(\frac{1-2\delta}{2(k-1)}\right)d_v + \frac{1}{2(k-1)\|\mathbf{x}_j\|_\infty}\mathbf{x}_j(v)d_v\right)\\
& = & \frac{1-2\delta}{2(k-1)}\Vol{G}=:\mu,
\end{eqnarray*}
by orthogonality of $\mathbf x_j$ to $\mathbf D\one$. Note that $\mu$ is independent of the choice of $j$.

On the other hand, for a given $j\leq k-1$, we can view the vertices in $S_j$ as chosen by a sequence of independent Bernoulli random variables, $X_v$, where $X_v\sim \Ber{ \frac{1-2\delta}{2(k-1)}+\frac{\mathbf{x}_j(v)}{2(k-1)\|\mathbf{x}_j\|_\infty}}$. Thus, $\Vol{S_j}=\sum_{v\in V(G)} d_vX_v$, and by Proposition \ref{chernoff}, we have that for all $\eps>0$, 
\[\p(|\Vol{S_j}-\mu|>\eps\mu)\leq 2\exp\left(\frac{-\eps^2\mu}{3\Delta}\right),\]
where $\Delta$ is the maximum degree of a vertex in $G$. 
%Let $A_j$ be the event that $(1-\eps)\mu<\Vol{S_j}<(1+\eps)\mu$. Then for each $j$, we have

Let $A$ be the event that $(1-\eps)\mu<\Vol{S_j}<(1+\eps)\mu$ for all $1\leq j\leq k-1$. By the union bound, we have
\[\p(A^c)\leq 2(k-1)\exp\left(\frac{-\eps^2\mu}{3\Delta}\right).\] Thus for each $j$, we have

% Need to separate the UNION of A_j from (UNION of A_j )^C instead.

\begin{eqnarray*}
\e{e(S_j, S_j)} & = & \e{e(S_j, S_j)\ind_{A} + e(S_j, S_j)\ind_{\overline{A}}}\\
& \leq & \e{e(S_j, S_j)\ind_{A}} + \Vol{G}\left(2(k-1)\exp\left(\frac{-\eps^2\mu}{3\Delta}\right)\right)\\
&=& \e{\frac{e(S_j, S_j)}{\Vol{S_j}}\Vol{S_j}\ind_{A}}+2(k-1)\Vol{G}\exp\left(-\frac{\eps^2\mu}{3\Delta}\right)\\
& = &\e{\frac{e(S_j, S_j)+e(S_j, \overline{S_j})-e(S_j, \overline{S_j})}{\Vol{S_j}}\Vol{S_j}\ind_{A}}+2(k-1)\Vol{G}\exp\left(-\frac{\eps^2\mu}{3\Delta}\right)\\ 
& = &\e{\frac{\Vol{S_j}-e(S_j, \overline{S_j})}{\Vol{S_j}}\Vol{S_j}\ind_{A}}+2(k-1)\Vol{G}\exp\left(-\frac{\eps^2\mu}{3\Delta}\right).
\end{eqnarray*}

% *** Old Version where the event was separated for each A_j
%\begin{eqnarray*}
%\e{e(S_j, S_j)} & = & \e{e(S_j, S_j)\ind_{A_j} + e(S_j, S_j)\ind_{\overline{A}_j}}\\
%& \leq & \e{e(S_j, S_j)\ind_{A_j}} + \Vol{G}\left(2\exp\left(\frac{-\eps^2\mu}{3\Delta}\right)\right)\\
%&=& \e{\frac{e(S_j, S_j)}{\Vol{S_j}}\Vol{S_j}\ind_{A_j}}+2\Vol{G}\exp\left(-\frac{\eps^2\mu}{3\Delta}\right)\\
%& = &\e{\frac{e(S_j, S_j)+e(S_j, \overline{S_j})-e(S_j, \overline{S_j})}{\Vol{S_j}}\Vol{S_j}\ind_{A_j}}+2\Vol{G}\exp\left(-\frac{\eps^2\mu}{3\Delta}\right)\\ 
%& = &\e{\frac{\Vol{S_j}-e(S_j, \overline{S_j})}{\Vol{S_j}}\Vol{S_j}\ind_{A_j}}+2\Vol{G}\exp\left(-\frac{\eps^2\mu}{3\Delta}\right).
%\end{eqnarray*}

Using linearity of expectation, we can sum over $j$ to obtain

\begin{eqnarray*}
\e{\sum_{j=1}^{k-1} e(S_j, S_j)} & \le & \sum_{j=1}^{k-1}\left[ \e{\frac{\Vol{S_j}-e(S_j, \overline{S}_j)}{\Vol{S_j}}\Vol{S_j}\ind_{A}} + 2(k-1)\Vol{G}\exp\left(-\eps^2\mu/(3\Delta)\right)\right]\\
& \leq &  (1+\eps)\mu~ \e{\sum_{j=1}^{k-1}\left(1-\frac{e(S_j, \overline{S_j})}{\Vol{S_j}}\right) \ind_{A}} +2(k-1)^2\Vol{G}\exp\left(-\eps^2\mu/(3\Delta)\right)\\
& = & (1+\eps)\mu~ \e{\left(k-1-\sum_{j=1}^{k-1}\sum_{\substack{i=1\\ j\neq i}}^{k} \frac{e(S_j, S_i)}{\Vol{S_j}}\right) \ind_{A}}+2(k-1)^2\Vol{G}\exp\left(-\eps^2\mu/(3\Delta)\right)\\
& = & (1+\eps)\mu~\e{\left(k-1-\sum_{i\neq j}\left(\frac{e(S_j, S_i)}{\Vol{S_j}} + \frac{e(S_i, S_j)}{\Vol{S_i}} \right)+\sum_{j=1}^{k-1}\frac{e(S_j, S_k)}{\Vol{S_k}}\right) \ind_{A}}\\&&+2(k-1)^2\Vol{G}\exp\left(-\eps^2\mu/(3\Delta)\right)\\
& \leq & (1+\eps)\mu~\e{\left(k-1-\sum_{i\neq j}\frac{2 e(S_j, S_i)}{\min\{\Vol{S_i},\Vol{S_j}\}} \frac{1-\varepsilon}{1+\varepsilon} +1\right)
\ind_{A}}\\& & +2(k-1)^2\Vol{G}\exp\left(-\eps^2\mu/(3\Delta)\right)\\
% we gain a factor of 2 here with by using the assumption above.
&\leq & (1+\eps)\mu\left(k-2kh_G^{(k)} \frac{1-\varepsilon}{1+\varepsilon}\right)+2(k-1)^2\Vol{G}\exp\left(-\eps^2\mu/(3\Delta)\right).
\end{eqnarray*}

On the other hand, we also have $\mathbf{w}_j=\one_{S_j}$, 
\[\e{\mathbf{w}_j} = \left(\frac{1-2\delta}{2(k-1)}\right)\one + \frac{1}{2\|\mathbf{x}_j\|_\infty}\mathbf{x}_j,\]
and thus, as the entries in $\mathbf{w}_j$ are independent as noted above, by Proposition \ref{expquadform}, we have

\begin{eqnarray*}
\e{e(S_j, S_j)} & = & \e{(\bD^{1/2}\mathbf{w}_j)^*(\bI-\L)(\bD^{1/2}\mathbf{w}_j)}\\
& = & \left(\left(\frac{1-2\delta}{2(k-1)}\right)\one + \frac{1}{2\|\mathbf{x}_j\|_\infty}\mathbf{x}_j\right)^*\bD^{1/2}(\bI-\L)\bD^{1/2}\left(\left(\frac{1-2\delta}{2(k-1)}\right)\one + \frac{1}{2\|\mathbf{x}_j\|_\infty}\mathbf{x}_j\right)\\
& = & \left(\frac{1-2\delta}{2(k-1)}\right)^2\Vol{G} + \frac{1-\lambda_j}{4\|\mathbf{x}_j\|_\infty^2}.
\end{eqnarray*}

We therefore obtain
\[ \sum_{j=1}^{k-1}\left(\left(\frac{1-2\delta}{2(k-1)}\right)^2\Vol{G} + \frac{1-\lambda_j}{4\|\mathbf{x}_j\|_\infty^2}\right)\leq  (1+\eps)\mu\left(k-2k h_G^{(k) }\frac{1-\varepsilon}{1+\varepsilon}\right)+2(k-1)^2\Vol{G}\exp\left(-\eps^2\mu/(3\Delta)\right).\]

Recall that $\frac{\Delta}{\Vol{G}}=\lilOh{n^{-\beta}}$ by hypothesis; choose $\delta=\eps=n^{-\beta/3}$. Then we have \[\exp\left(-\eps^2\mu/(3\Delta)\right) = \exp\left(-\frac{n^{-2\beta/3}(1-2n^{-\beta/3})\Vol{G}}{6(k-1)\Delta}\right)\leq \exp(-C(1-\lilOh{1})n^{\beta/3}),\] for $C$ an appropriate constant. As $\Vol{G}\leq n^2$, we thus have that the error term satisfies
\[ 2(k-1)^2\Vol{G}\exp\left(-\eps^2\mu/(3\Delta)\right) \leq 2(k-1)^2n^2\exp(-C(1-\lilOh{1})n^{\beta/3})=\lilOh{1}.\]

Moreover, as $\|\mathbf{x}_j\|_\infty\geq \alpha$  and $1-\lambda_j\geq 0$ for all $j$, we have
\begin{equation}\label{nonpositive}\sum_{j=1}^{k-1} \frac{1-\lambda_j}{4\|\mathbf{x}_j\|_\infty^2} \geq \frac{k\Lambda}{4\alpha^2}.\end{equation} Therefore,

\begin{eqnarray*}
\sum_{j=1}^{k-1}\left( \left(\frac{1-2\delta}{2(k-1)}\right)^2\Vol{G} + \frac{1-\lambda_j}{4\|\mathbf{x}_j\|_\infty^2}\right) & \leq & (1+\eps)\mu\left(k-2 kh_G^{(k)} \frac{1-\varepsilon}{1+\varepsilon}\right)+\lilOh{1}\\
\frac{(1-2\delta)^2}{4(k-1)}\Vol{G} + \frac{k\Lambda}{4\alpha^2} & \leq &  (1+\lilOh{1})\frac{\Vol{G}}{2(k-1)}\left(k-2kh_G^{(k)}(1-\lilOh{1})\right)+\lilOh{1}
\end{eqnarray*}

Solving for $\hgk$ yields
\[
h_G^{(k)}  \leq \left[\frac{1}{2}-\frac{1}{4k} -\frac{(k-1)\Lambda}{4\Vol{G}\alpha^2}\right](1+\lilOh{1}),
\]
as desired.

\end{proof}

To obtain the proof of Theorem \ref{T:nonpos}, we note that the only inequality that fails above when $\lambda_{k-1}>1$ is (\ref{nonpositive}). To correct for this problem, we shall slightly modify the definition of the random variable $s_v$ for each $v$.

\begin{proof}[Proof of Theorem \ref{T:nonpos}, upper bound]
As in the proof of Theorem \ref{T:main}, Let $\mathbf{x}_1, \mathbf{x}_2, \dots, \mathbf{x}_{k-1}$ be the first $k-1$ nontrivial harmonic eigenvectors for $G$, with corresponding eigenvalues $\lambda_1\leq \lambda_2\leq \dots\leq \lambda_{k-1}$. Let $\alpha = \sum_{i=1}^{k-1}\|\mathbf{x}_i\|_\infty$. Let $\delta>0$, and for all $v\in V(G)$, define a random variable $s_v$ by 
\[\begin{array}{l} s_v = j \hbox{ with probability } \frac{1-2\delta}{2(k-1)}+\frac{\mathbf{x}_j(v)}{2(k-1)\alpha} \hbox{ for }j=1, 2, \dots, k-1\\ s_v=k \hbox{ otherwise }\end{array}.\]

Proceed with the proof as in Theorem \ref{T:main}, noting that we can replace inequality (\ref{nonpositive}) with the equality \[\sum_{j=1}^{k-1}\frac{1-\lambda_j}{4\alpha^2}=\frac{k\Lambda}{4\alpha^2},\]
and that all else is unchanged. The result then follows.

%%Discussion about a better way to choose the $\mathbf{x}_j$ vectors should happen here.

\end{proof}

\section{Example}\label{S:example}

As an example of an application of Theorem \ref{T:nonpos}, we consider the complete graph $K_n$. Take $k$ to be a fixed constant, and we shall consider the asymptotics of $\hgk$ as $n\to\infty$.

As with the standard Cheeger constant, it is quite clear to see that $\hgk(\mathcal{S})$ will be minimized when the $S_i$ are roughly an equipartition of $n$, that is, when there are exactly $r=n\mod k$ sets of size $\ceil{\frac{n}{k}}$ and the rest are of size $\floor{\frac{n}{k}}$. Letting $\mathcal{S}$ be such a partition, we have
\begin{eqnarray*}
h_{K_n}^{(k)} & = & \frac{1}{k}\sum_{i\neq j} \frac{|S_i||S_j|}{\min\{(n-1)|S_i|, (n-1)|S_j|\}}\\
&=&\frac{1}{k(n-1)}\left( {r\choose 2}\ceil{\frac{n}{k}}+{k-r\choose 2}\floor{\frac{n}{k}} + \left({k\choose 2}-  {r\choose 2} -{k-r\choose 2}\right) \ceil{\frac{n}{k}} \right)\\
&\sim & \frac{1}{k(n-1)}\left({k\choose2}\frac{n}{k}\right)\\
& \sim & \frac{1}{2}-\frac{1}{2k}.
\end{eqnarray*}

Recall that the Laplacian eigenvalues of $K_n$ are $\lambda_0=0$ and $\lambda_i=\frac{n}{n-1}$ for all $i>0$. Therefore, the lower bound given in Theorem \ref{T:main} yields $h_{K_n}^{(k)}\geq \frac{1}{2k}(k-1)\frac{n}{n-1}\sim \frac{1}{2}-\frac{1}{2k}$, a true estimate for the Cheeger ratio.

Moreover, $\Lambda = \frac{k-1}{k}(-\frac{1}{n-1})$. Also, we have $\bv_0$, the eigenvector corresponding to 0, is given by $\bv_0=\frac{1}{\sqrt{n}}\one$. Thus any vector perpendicular to $\bv_0$ is an eigenvector for $\frac{n}{n-1}$. Note that as $\Lambda$ is negative here, we wish to maximize $\alpha^2$ in order to minimize the upper bound. Thus we take $\bv_i=\frac{1}{\sqrt{2}} (\mathbf e_{2i-1}-\mathbf e_{2i})$. It is clear that these are orthonormal. In addition, this implies that the harmonic eigenvectors are $\mathbf x_i=\bD^{-1/2}\mathbf v_i = \frac{1}{\sqrt{2(n-1)}} (\mathbf e_{2i-1}-\mathbf e_{2i})$, and hence $\alpha^2 = \left(\frac{k-1}{\sqrt{2(n-1)}}\right)^2 = \frac{(k-1)^2}{2(n-1)}$. 

Therefore, we obtain as the upper bound from Theorem \ref{T:nonpos}

\begin{eqnarray*}
h_{K_n}^{(k)}& \leq & \left[\frac{1}{2}-\frac{1}{4k} -\frac{(k-1)\Lambda}{4\Vol{G}\alpha^2}\right](1+\lilOh{1})\\
& = & \left[\frac{1}{2}-\frac{1}{4k} -\frac{(k-1)\frac{k-1}{k}(-\frac{1}{n-1})}{4n(n-1)\frac{(k-1)^2}{2(n-1)}}\right](1+\lilOh{1})\\
& = & \left(\frac{1}{2}-\frac{1}{4k}\right)(1+\lilOh{1}),
\end{eqnarray*}
compared to the true constant $\frac{1}{2}-\frac{1}{2k}$.

%%We should probably have another example for the use of the other Theorem. What's another graph family with nontrivial partitions?

\bibliographystyle{siam}
\bibliography{bib_items}

\begin{thebibliography}{10}

\bibitem{AlonandSpencer}
{\sc N.~Alon and J.~Spencer}, {\em The Probabilistic Method}, John Wiley \&
  Sons, 3~ed., 2008.

\bibitem{chung1993laplacians}
{\sc F.~R. Chung}, {\em Laplacians of graphs and cheegerÕs inequalities}, in
  Proc. Int. Conf.Ó Combinatorics, Paul Erdos is EightyÓ, Keszthely (Hungary),
  vol.~2, 1993, p.~116.

\bibitem{chung1997spectral}
\leavevmode\vrule height 2pt depth -1.6pt width 23pt, {\em Spectral graph
  theory}, vol.~92, American Mathematical Soc., 1997.

\bibitem{girvan2002community}
{\sc M.~Girvan and M.~E. Newman}, {\em Community structure in social and
  biological networks}, Proceedings of the National Academy of Sciences, 99
  (2002), pp.~7821--7826.

\bibitem{graham2010finding}
{\sc F.~C. Graham and A.~Tsiatas}, {\em Finding and visualizing graph clusters
  using pagerank optimization}, in Algorithms and Models for the Web-Graph,
  Springer, 2010, pp.~86--97.

\bibitem{HornandJohnson}
{\sc R.~Horn and C.~Johnson}, {\em Matrix Analysis}, Cambridge University
  Press, 1985.

\bibitem{kenter2014linear}
{\sc F.~Kenter}, {\em A linear cheeger inequality using eigenvector norms},
  Journal of Combinatorics,  (to appear).

\bibitem{kwok2013improved}
{\sc T.~C. Kwok, L.~C. Lau, Y.~T. Lee, S.~Oveis~Gharan, and L.~Trevisan}, {\em
  Improved cheeger's inequality: analysis of spectral partitioning algorithms
  through higher order spectral gap}, in Proceedings of the forty-fifth annual
  ACM symposium on Theory of computing, ACM, 2013, pp.~11--20.

\bibitem{lee2012multi}
{\sc J.~R. Lee, S.~Oveis~Gharan, and L.~Trevisan}, {\em Multi-way spectral
  partitioning and higher-order cheeger inequalities}, in Proceedings of the
  forty-fourth annual ACM symposium on Theory of computing, ACM, 2012,
  pp.~1117--1130.

\bibitem{louis2011algorithmic}
{\sc A.~Louis, P.~Raghavendra, P.~Tetali, and S.~Vempala}, {\em Algorithmic
  extensions of cheegerÕs inequality to higher eigenvalues and partitions}, in
  Approximation, Randomization, and Combinatorial Optimization. Algorithms and
  Techniques, Springer, 2011, pp.~315--326.

\bibitem{louis2012many}
\leavevmode\vrule height 2pt depth -1.6pt width 23pt, {\em Many sparse cuts via
  higher eigenvalues}, in Proceedings of the forty-fourth annual ACM symposium
  on Theory of computing, ACM, 2012, pp.~1131--1140.

\bibitem{ng2002spectral}
{\sc A.~Y. Ng, M.~I. Jordan, Y.~Weiss, et~al.}, {\em On spectral clustering:
  Analysis and an algorithm}, Advances in neural information processing
  systems, 2 (2002), pp.~849--856.

\bibitem{schaeffer2007graph}
{\sc S.~E. Schaeffer}, {\em Graph clustering}, Computer Science Review, 1
  (2007), pp.~27--64.

\bibitem{spielman2008local}
{\sc D.~A. Spielman and S.-H. Teng}, {\em A local clustering algorithm for
  massive graphs and its application to nearly-linear time graph partitioning},
  arXiv preprint arXiv:0809.3232,  (2008).

\end{thebibliography}

\end{document}